\documentclass[12pt]{article}

\usepackage[T1]{fontenc}
\usepackage{lmodern}
\usepackage[margin=1in]{geometry}
\usepackage{microtype}
\usepackage{amsmath,amssymb,amsthm,mathtools}
\usepackage{booktabs}
\usepackage{array}
\usepackage{enumitem}
\usepackage{xcolor}
\usepackage[round,authoryear]{natbib}
\usepackage{tikz}
\usepackage{pgfplots}
\pgfplotsset{compat=1.18}
\usetikzlibrary{arrows.meta,positioning,calc,fit,decorations.pathreplacing,patterns,backgrounds}
\usepackage{listings}
\usepackage[hidelinks]{hyperref}

\newtheorem{theorem}{Theorem}
\newtheorem{lemma}[theorem]{Lemma}

\theoremstyle{remark}

\newcommand{\FDR}{\operatorname{FDR}}
\newcommand{\FDP}{\operatorname{FDP}}

\newcommand{\1}{\mathbf 1}
\newcommand{\R}{\mathbb R}
\newcommand{\Pp}{\mathbb P}
\newcommand{\E}{\mathbb E}
\newcommand{\Phibar}{\overline\Phi}
\newcommand{\eps}{\varepsilon}
\newcommand{\doi}[1]{\href{https://doi.org/#1}{doi:\nolinkurl{#1}}}
\newcommand{\arxiv}[1]{\href{https://arxiv.org/abs/#1}{arXiv:\nolinkurl{#1}}}

\definecolor{bhblue}{RGB}{43,108,176}
\definecolor{bhorange}{RGB}{221,126,32}
\definecolor{bhgreen}{RGB}{40,145,93}
\definecolor{bhred}{RGB}{190,66,58}
\definecolor{bhpurple}{RGB}{117,91,164}
\definecolor{bhgray}{RGB}{95,105,115}

\tikzset{
  bhflow/.style={
    draw=bhgray,
    rounded corners=2pt,
    thick,
    align=center,
    minimum height=1.45cm,
    text width=2.55cm,
    fill=bhgray!5,
    font=\small
  },
  bhblock/.style={
    draw=bhgray,
    rounded corners=2pt,
    thick,
    align=left,
    text width=7.0cm,
    minimum height=1.35cm,
    inner sep=6pt,
    font=\small
  },
  bhnoise/.style={
    draw=bhgray,
    rounded corners=2pt,
    align=center,
    text width=2.35cm,
    minimum height=1.05cm,
    fill=bhgray!4,
    font=\scriptsize
  }
}

\lstdefinestyle{certificate}{
  language=Python,
  basicstyle=\ttfamily\scriptsize,
  keywordstyle=\bfseries,
  commentstyle=\itshape,
  columns=fullflexible,
  keepspaces=true,
  breaklines=true,
  breakatwhitespace=false,
  showstringspaces=false,
  frame=single,
  xleftmargin=0.5em,
  xrightmargin=0.5em,
  aboveskip=1em,
  belowskip=1em
}

\title{The Benjamini--Hochberg Procedure Can Fail to Control the FDR
for Correlated Two-Sided Gaussian Tests}
\author{
Edgar Dobriban\footnote{
Department of Statistics and Data Science, University of Pennsylvania. E-mail address: \texttt{dobriban@wharton.upenn.edu}.}
}
\date{\today}

\begin{document}
\maketitle

\begin{abstract}
We show that the Benjamini--Hochberg procedure can fail to control
the false discovery rate (FDR) at its nominal level for correlated
two-sided Gaussian $p$-values.  We construct a factor model for which,
at level $\alpha=0.01$, a rigorous interval-arithmetic certificate proves
$\FDR>0.0104$ for all sufficiently large numbers of hypotheses.
This disproves a conjecture widely believed to be true for twenty years.
Monte Carlo experiments are consistent with the theoretical result.
The proof was obtained by GPT-5.6 Pro and carefully checked by the author.
\end{abstract}

%\tableofcontents

\section{The conjecture and the counterexample}

Multiple hypothesis testing is a central topic in modern statistics, with
applications in biology, genomics, astronomy, economics, finance, and many
other fields.  Over the last three decades, controlling the \emph{false
discovery rate} (FDR) \citep{BenjaminiHochberg1995} has become a standard
target in applications involving many tests.

The Benjamini--Hochberg (BH) procedure is the standard method for FDR control.
It controls the FDR when the $p$-values are independent
\citep{BenjaminiHochberg1995}, or when they satisfy the weaker positive
regression dependence condition of \citet{BenjaminiYekutieli2001}.  See also
\citet{Sarkar2002,BlanchardRoquain2008,FinnerDickhausRoters2007} and the
references below.

A crucial setting that had remained unresolved is that of correlated two-sided
Gaussian tests.  This setting matters because many data sets involve correlated
tests; for example, neighboring genes or genetic variants can be correlated.
Two-sided tests are also common because the direction of an effect is often
unknown in advance, requiring simultaneous sensitivity to positive and
negative effects.

In this paper we show, contrary to prior conjectures and supporting evidence
\citep[e.g.,][]{ReinerBenaim2007,Benjamini2010}, that the Benjamini--Hochberg
procedure need not control the FDR at its nominal level for correlated
two-sided Gaussian tests.

More specifically, we consider the following setting. 
Let $X\sim \mathcal{N}_m(\mu,\Sigma)$, where $\Sigma$ is a correlation matrix.  For
$1\le i\le m$, form the two-sided Gaussian $p$-value
$
 P_i=2\Phibar(|X_i|),
$
where $\Phi$ is the standard normal CDF and $\Phibar=1-\Phi$.  
The Benjamini--Hochberg (BH) procedure \citep{BenjaminiHochberg1995}
at
level $\alpha \in (0,1)$
uses critical values $t_r=\alpha r/m$.  
Let
$
 R=\max\bigl\{r:\#\{i:P_i\le t_r\}\ge r\bigr\},
$
with $R=0$ if the set is empty.
The BH procedure rejects  those hypotheses 
$H_i:\mu_i=0$
with
$P_i\le t_R$.  
If $I_0=\{i:\mu_i=0\}$ denotes the true-null index set, define the
number of false rejections by
$
 V=\#\{i\in I_0:P_i\le t_R\}.
$
The false discovery proportion (FDP), the fraction of rejected hypotheses that
are true nulls, is $\FDP=V/\max(R,1)$, and the false discovery rate is its mean,
$\FDR=\E[\FDP]$.

A key question about the BH procedure is the following:

\begin{center}
        \emph{Does the Benjamini--Hochberg procedure control the false discovery rate at level $\alpha$, i.e., do we have $\FDR\le\alpha$, 
        for every $m$, every mean vector, every correlation matrix,
and every $\alpha\in(0,1)$?}
\end{center}

Before this work, a positive answer was widely believed.
\citet{farcomeni2006more,ReinerBenaim2007} provided empirical and theoretical
evidence in support of FDR control, and \citet{kim2008effects} provided
additional evidence through extensive simulations.  In an influential review,
\citet{Benjamini2010} wrote that ``\emph{convincing simutheoretical evidence
indicates that [FDR control] holds for two-sided z-tests with any correlation
structure}.''  This motivates referring to the positive answer as a conjecture.

More recently, \citet{Sarkar2023} wrote that ``\emph{The answer to this question
is generally believed to be yes, and is conjectured so in the literature since
results of numerical studies investigating the question and reported in
numerous papers strongly support it}.''  \citet{Sarkar2023} continued that
``\emph{proving this conjecture [\ldots] seems an urgent and important undertaking.}''
Similar statements were made by \citet{SarkarZhang2025} and
\citet{GhoshSarkar2025}.

%Beyond these statements,  

In this paper we show that the conjecture fails, by constructing the following Gaussian factor model. 
 Let
$
 Z,\,(\eps_i)_{i\ge1},\,(\eta_j)_{j\ge1},\,(\xi_k)_{k\ge1}
$
be mutually independent standard normal random variables.  For a fixed $N$,
define three coordinate blocks by
\begin{align}
  X_i^{(0)}
  &=\frac{3}{10}Z+\frac{\sqrt{91}}{10}\eps_i,
  &&1\le i\le96N,\label{eq:block0}\\
  X_j^{(1)}
  &=\frac{12}{5}-\frac{3}{10}Z+\frac{\sqrt{91}}{10}\eta_j,
  &&1\le j\le N,\label{eq:block1}\\
  X_k^{(2)}
  &=\frac{22}{5}-\frac{18}{25}Z+\frac{\sqrt{301}}{25}\xi_k,
  &&1\le k\le3N.\label{eq:block2}
\end{align}
The block in \eqref{eq:block0} contains the $96N$ true nulls.  The other two
blocks contain the $4N$ nonnulls.  
% Figure~\ref{fig:factor-model} displays the
% sign pattern that drives the construction:
The null block moves with $Z$,
whereas both signal blocks move against it, at two different strengths.

\begin{theorem}[The Benjamini--Hochberg Procedure Can Fail to Control the FDR
for Correlated Two-Sided Gaussian Tests]\label{thm:main}
For each integer $N\ge1$, let $m_N=100N$, let the first $96N$ hypotheses be
true nulls, and let $\FDR_N$ denote the FDR in the model above.  Then the
Benjamini--Hochberg procedure at level $\alpha=0.01$ satisfies, for all
sufficiently large $N$,
$$
  \FDR_N>0.0104>\alpha.
$$
Consequently, the conjecture is false.
\end{theorem}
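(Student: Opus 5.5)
Conditionally on $Z=z$, the three blocks are independent families of i.i.d.\ Gaussians, so the plan is to pass to the large-$N$ limit conditionally on $Z$. Then take the expectation over $Z$, and finally certify the resulting one-dimensional integral numerically.

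\emph{Step 1 (conditional limit).} Fix $z$ and write $\sigma_0=\sigma_1=\sqrt{91}/10$, $\sigma_2=\sqrt{301}/25$, with conditional means $a_0(z)=0.3z$, $a_1(z)=2.4-0.3z$, $a_2(z)=4.4-0.72z$. For a threshold $t\in(0,1]$ let $u(t)=\Phibar^{-1}(t/2)$ and
\[
F_b(t\mid z)=\Pp\bigl(|a_b(z)+\sigma_b G|\ge u(t)\bigr),\qquad G\sim\mathcal N(0,1).
\]
The empirical fraction of rejections at threshold $t$ is $H_N(t)=(96F_0+F_1+3F_2)/100$ up to fluctuations. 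By the Glivenko--Cantelli (or DKW) inequality applied within each block conditionally on $z$, these fluctuations go to zero uniformly in $t$ almost surely. Since $R/m=\max\{s: H_N(\alpha s)\ge s\}$ on the grid $s=r/m$, the limit is the largest fixed point
\[
s^*(z)=\sup\{s\in(0,1]: H(\alpha s\mid z)\ge s\}.
\]
Whenever $s^*(z)>0$ and the crossing is transversal, that is $H(\alpha s\mid z)-s$ changes sign strictly at $s^*$, we get $R/m\to s^*$ and
\[
\FDP\to \FDP^*(z)=\frac{0.96\,F_0(\alpha s^*\mid z)}{s^*(z)}.
\]
If $s^*(z)=0$, meaning there is no positive crossing, then $\FDP\ge 0$ trivially and we simply drop these $z$ from the lower bound. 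Since $\FDP\in[0,1]$, Fatou's lemma (or dominated convergence on the good set) gives
\[
\liminf_N \FDR_N\ \ge\ \int \FDP^*(z)\,\1\{z\in \mathcal G\}\,\varphi(z)\,dz,
\]
where $\mathcal G$ is a set of $z$ on which transversality is verified.

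\emph{Step 2 (mechanism).} For $z$ large and positive, the nulls shift toward $0.3z$ and inflate in number of small two-sided $p$-values. At the same time, the signals shift toward $0$: block 1 is centred near $2.4-0.3z$ and block 2 near $4.4-0.72z$. As a result the signal contribution drops, and the realized FDP exceeds $\alpha\cdot 0.96$ by a lot. For negative $z$ the signals are strong, and BH behaves nearly as under independence. The factor $0.96=\pi_0$ is the baseline, so the integral must exceed $0.0104$, which is a genuinely delicate surplus. The integral is not available in closed form.

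\emph{Step 3 (certification, main obstacle).} Truncate to $z\in[-L,L]$, discarding mass at most $2\Phibar(L)$ as a lower bound. Partition the interval into small cells. On each cell, use monotonicity to bound $s^*(z)$ and $F_0(\alpha s^*\mid z)$ from below and above. $F_b$ is monotone in $t$, and one can bound it over the cell using interval arithmetic on $\Phi$ evaluated at the endpoint means. Combining these gives a rigorous lower bound for $\FDP^*$ on the cell, and also a verification that $H(\alpha s)-s$ is strictly negative just above the bracketed root, which certifies transversality. Summing the cell lower bounds times $\Phi$-mass intervals gives a certified value above $0.0104$.

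The hardest part is controlling $s^*$ rigorously near values of $z$ where the fixed-point curve is nearly tangent, since $s^*$ can jump there. My first attempt would be to exclude small neighbourhoods of such $z$, counting them as $\FDP\ge0$, and refine cells adaptively elsewhere until the certified sum clears $0.0104$. Since the limit inequality is strict, "for all sufficiently large $N$" then follows.
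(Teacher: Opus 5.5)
Your architecture matches the paper's. You condition on $Z$, apply Glivenko--Cantelli within each block, reduce to a deterministic crossing problem for the limiting CDF, bound it on $z$-cells by monotonicity of the two-sided tail (in $c$ and in the absolute mean), and combine cellwise lower bounds via Fatou and an outward-rounded sum. The step that fails as written is the route through transversality. Step~1 gives $R/m\to s^*(z)$ and $\FDP\to\FDP^*(z)$ only when $H(\alpha s\mid z)-s$ changes sign strictly at $s^*$. Step~3 then claims that checking $H(\alpha s)-s<0$ ``just above the bracketed root'' certifies this, but it does not. Finitely many interval sign checks on a cell can establish strict feasibility at some $s_-$ and strict infeasibility above some $s_+$. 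That only locates $s^*\in[s_-,s_+]$ and says nothing about the local behaviour at the unknown point $s^*$. There could be a tangency at $s^*$, with the genuine sign change occurring lower down. So the set $\mathcal G$ over which you integrate $\FDP^*$ is never actually certified, and your worry about near-tangent $z$ is a symptom of this.

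The fix is to drop transversality and convergence of $R/m$ altogether; this is exactly the paper's Lemmas~\ref{lem:threshold} and~\ref{lem:fdp}. Suppose $H(\alpha s_-\mid z)>s_-$ and $H(\alpha s\mid z)<s$ for every $s$ in the whole terminal interval $[s_+,1]$. The check must cover all of $[s_+,1]$, not just a neighbourhood of the root, since a second crossing farther out would move $s^*$. Then uniform convergence alone gives $s_-\le\liminf R/m\le\limsup R/m\le s_+$. The identity $\FDP_N=0.96\,\widehat F_{0,N}(\alpha R/m)/(R/m)$ then yields $\liminf_N\FDP_N\ge0.96\,F_0(\alpha s_-\mid z)/s_+$ directly. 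Near-tangent $z$ then need no exclusion or adaptive refinement; they merely widen the bracket. The paper clears $0.0104$ with uniform cells of width $0.01$ in $z$ and a $c$-grid of mesh $0.001$.

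Two smaller points. First, the monotonicity is in $|a_b(z)|$, so on each cell use the minimum and maximum of $|a_b|$ (zero if $a_b$ changes sign inside the cell) rather than values at the endpoint means. Second, your Step~2 heuristic is off. At $z=0$ the limiting conditional FDP is only about $0.005$, below $\pi_0\alpha$. Meanwhile the certified lower bounds for negative $z$ are large, averaging above $0.03$ on $[-3,-2]$. This is because for negative $z$ the null mean $0.3z$ still thickens the two-sided null tails while the strengthened signals enlarge the rejection count. That heuristic carries no logical weight, so it does not affect the proof.
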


\begin{proof}
The construction and all analytic details occupy the remainder of the paper.
The final strict numerical inequality is established by the complete
outward-rounded certificate in Appendix~\ref{app:certificate}.
\end{proof}

\paragraph{AI usage.}
The proof was obtained by GPT-5.6 Pro.  The model was asked directly to prove
or disprove the conjecture and was provided only with the mathematical
definition of the Benjamini--Hochberg procedure.  After about 90 minutes of
reasoning, the model produced a proof, an example, and code for the numerical
certificate, which form the basis of this paper.\footnote{The conversation is
available as a \href{https://chatgpt.com/share/6a541c6f-a2d0-83ea-bb2f-782271a103ca}{shared ChatGPT conversation}.}
The author carefully checked the entire argument and the associated numerical
certificate.  Subsequently, the author asked the model to provide additional
simulations, related work, and illustrations for a paper draft, and wrote the
final version by editing the AI-generated draft.

Thus, this work falls into a line of work 
where AI models have helped professional mathematical scientists
resolve open problems, see e.g., \cite{feldman2025g,jang2025point,salim2025accelerating,bubeck2025early,alexeev2025asymptotically,alexeev2025forbidden,dobriban2025solving,abouzaid2026first,openai2026unitdistance,wang2026adaboost,openai2026cycle}, etc.

\section{Relation to existing FDR analysis}
\label{sec:innovation}

The conjecture lies between two classical regimes.  Under mutual independence
of the null $p$-values and independence from the nonnull $p$-values, ordinary
BH satisfies the sharper bound $\FDR\le\pi_0\alpha$, where
$\pi_0=|I_0|/m$ \citep{BenjaminiHochberg1995}.  The same bound holds under
positive regression dependence on the subset of true nulls (PRDS)
\citep{BenjaminiYekutieli2001,Sarkar2002,BlanchardRoquain2008}. 

By contrast,
under completely arbitrary dependence the universal finite-sample guarantee
for the unmodified linear step-up rule carries a harmonic inflation factor;
replacing $\alpha$ by $\alpha/H_m$, where $H_m=\sum_{r=1}^m r^{-1}$, restores
level-$\alpha$ control \citep{BenjaminiYekutieli2001}.  

For one-sided Gaussian tests, nonnegative correlations provide an important
PRDS setting \citep{BenjaminiYekutieli2001,Sarkar2002}.  The two-sided
transformation is qualitatively different:
$\{P_i\le t\}=\{X_i\ge c_t\}\cup\{X_i\le-c_t\}$ folds together two tails that
induce opposite conditional shifts in correlated coordinates.  Consequently,
the usual monotone-regression and total-positivity arguments used for one-sided
statistics do not automatically transfer to the folded Gaussian vector. 
This
obstruction is discussed explicitly by \citet{SarkarZhang2025} and
\citet{GhoshSarkar2025}.  

Earlier work of \citet{ReinerBenaim2007} combined
low-dimensional analysis, upper bounds, and simulation evidence, and
\citet{Benjamini2010} described the evidence for arbitrary-correlation control
as ``simutheoretical'' while noting that a complete proof was unavailable.
The later literature therefore developed dependence-adjusted or shifted
alternatives with provable control rather than a proof for ordinary BH
\citep{FithianLei2022,Sarkar2023,SarkarZhang2025,GhoshSarkar2025}. 

Our argument uses a classical
empirical CDF crossing representation of BH.  In
independent mixture models, and in several dependent asymptotic regimes, the
random BH threshold is compared with the crossing of a limiting $p$-value CDF
and the line $t/\alpha$
\citep{GenoveseWasserman2002,GenoveseWasserman2004,StoreyTaylorSiegmund2004,FinnerDickhausRoters2007}.

An innovation in our setting is the construction of a specific Gaussian factor model. 
 Conditioning on a single latent factor produces three
independent within-block empirical processes, but leaves a \emph{random}
limiting CDF $G_Z$.  The loadings and nonnull means are tuned so that, on a
set of latent-factor values of positive Gaussian probability, the nonnull
blocks enlarge the BH rejection count at precisely the same time that the
conditional null distribution has heavier two-sided tails.  This creates a
self-consistent threshold with an FDP slightly above $\alpha$.

A second innovation is that the argument 
avoids requiring 
a unique limiting
crossing, differentiability at a crossing, or convergence of the BH threshold
to an explicitly solved root.  Two strict sign conditions merely bracket the
threshold.  Monotonicity of Gaussian tail probabilities then turns the
continuum of $(z,c)$ values into finitely many rational rectangles, and
outward-rounded ball arithmetic verifies the required signs.   This combination of a one-sided threshold bracketing and a finite
interval certificate is the distinctive mechanism of the proof.

\begin{figure}[htbp]
\centering
\begin{tikzpicture}[node distance=0.35cm and 0.35cm, >=Latex]
  \node[bhflow,fill=bhblue!7] (latent)
    {latent state\\$Z=z$};
  \node[bhflow,right=of latent,fill=bhpurple!7] (limits)
    {conditional block laws\\$\widehat G_N\to G_z$};
  \node[bhflow,right=of limits,fill=bhorange!8] (signs)
    {strict crossing signs\\at $a_k,b_k$};
  \node[bhflow,right=of signs,fill=bhgreen!8] (fdp)
    {threshold bracket\\and FDP bound $d_k$};
  \node[bhflow,right=of fdp,fill=bhred!7] (integrate)
    {Gaussian integration\\plus Arb certificate};
  \draw[->,thick] (latent) -- (limits);
  \draw[->,thick] (limits) -- (signs);
  \draw[->,thick] (signs) -- (fdp);
  \draw[->,thick] (fdp) -- (integrate);
\end{tikzpicture}
\caption{Proof architecture.  The latent factor is not averaged out at the
start; it indexes a family of deterministic limiting BH problems whose
pointwise lower bounds are integrated only at the end.}
\label{fig:proof-architecture}
\end{figure}

\section{Lemmas for the asymptotic BH threshold}

In the remaining sections we present the argument of the proof. 
The proof is transparent in terms of empirical $p$-value distribution
functions.  Interpreting BH as the rightmost crossing of an empirical CDF and
the line $t/\alpha$ is standard in the asymptotic FDR literature
\citep{GenoveseWasserman2002,GenoveseWasserman2004,StoreyTaylorSiegmund2004}.
The following lemma isolates the
weaker one-sided bracketing
fact that will be needed here.

For a sample of size $M_N$, let
$$
  \widehat G_N(t)=\frac1{M_N}\sum_{i=1}^{M_N}\1\{P_{i,N}\le t\},
  \qquad 0\le t\le1.
$$
Let the BH grid be
$
  \mathcal T_N=\left\{\frac{\alpha r}{M_N}:1\le r\le M_N\right\},
$
and define the BH $p$-value threshold
$$
  \tau_N=\max\left(
  \left\{t\in\mathcal T_N:\widehat G_N(t)\ge\frac{t}{\alpha}\right\}
  \cup\{0\}\right).
$$
This is exactly $\tau_N=\alpha R_N/M_N$: at the grid point
$t=\alpha r/M_N$, the inequality $\widehat G_N(t)\ge t/\alpha$ is equivalent
to having at least $r$ $p$-values below the $r$th BH critical value.
 The argument will leverage the two lemmas below, 
whose proofs
 are presented in the appendix. 

\begin{lemma}[Two strict sign conditions bracket the BH threshold]
\label{lem:threshold}
Fix $\alpha\in(0,1)$.  Suppose $M_N\to\infty$ and, for a continuous
distribution function $G$ on $[0,1]$,
$
  \|\widehat G_N-G\|_\infty\longrightarrow0.
$
Let $0<v\le w\le\alpha$.  If
\begin{equation}
  G(t)<\frac{t}{\alpha}
  \quad\text{for every }t\in[w,\alpha],
  \label{eq:no-feasible}
\end{equation}
and
$G(v)>\frac{v}{\alpha}$,
then $\limsup_{N\to\infty}\tau_N\le w$ and
$\liminf_{N\to\infty}\tau_N\ge v$.
\end{lemma}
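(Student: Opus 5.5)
The plan is to handle the upper bound by compactness and the lower bound by a grid-approximation argument. In both parts, the uniform convergence $\|\widehat G_N-G\|_\infty\to0$ transfers the strict sign conditions on $G$ to $\widehat G_N$ for all large $N$.

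\emph{Upper bound.} The function $t\mapsto t/\alpha-G(t)$ is continuous on the compact interval $[w,\alpha]$. By \eqref{eq:no-feasible} it is strictly positive there, so it attains a minimum $\delta>0$. Choose $N_0$ such that $\|\widehat G_N-G\|_\infty<\delta/2$ for $N\ge N_0$. Then every $t\in[w,\alpha]$ satisfies
\[
\widehat G_N(t)<G(t)+\delta/2\le t/\alpha-\delta/2<t/\alpha .
\]
Hence no grid point $t\in\mathcal T_N\cap[w,\alpha]$ satisfies $\widehat G_N(t)\ge t/\alpha$. Since $\mathcal T_N\subset(0,\alpha]$, every feasible grid point lies below $w$, so $\tau_N<w$ for all $N\ge N_0$. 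This already gives $\limsup\tau_N\le w$.

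\emph{Lower bound.} Put $\gamma=G(v)-v/\alpha>0$. I would use a grid point slightly above $v$, because $\widehat G_N$ is nondecreasing and $t/\alpha$ changes by at most $1/M_N$ between consecutive grid points. Let $r_N=\lceil vM_N/\alpha\rceil$ and $s_N=\alpha r_N/M_N$. Then $v\le s_N<v+\alpha/M_N$, and $s_N\le\alpha$ because $v\le\alpha$, so $s_N\in\mathcal T_N$ whenever $r_N\ge1$, which holds since $v>0$. By monotonicity,
\[
\widehat G_N(s_N)\ge\widehat G_N(v)\ge G(v)-\|\widehat G_N-G\|_\infty ,
\]
while $s_N/\alpha<v/\alpha+1/M_N$. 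Since $M_N\to\infty$ and the sup-norm tends to $0$, for large $N$ we have $\|\widehat G_N-G\|_\infty+1/M_N<\gamma$. For such $N$, $\widehat G_N(s_N)>s_N/\alpha$, so $s_N$ is feasible and $\tau_N\ge s_N\ge v$. Therefore $\liminf\tau_N\ge v$. Continuity of $G$ is not even needed for this half.

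The only real subtlety is the discreteness of the grid $\mathcal T_N$ in the lower bound: a feasible point cannot simply be taken at $v$ itself. This is handled by rounding up to $s_N$ and absorbing the resulting $O(1/M_N)$ error into the positive margin $\gamma$. The role of continuity in the upper bound is to turn the pointwise strict inequality into a uniform margin $\delta$, which is what makes the uniform convergence usable. If the $\widehat G_N$ are random and the convergence is almost sure or in probability, the same argument applies pathwise, or on events of probability tending to one.
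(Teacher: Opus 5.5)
Your proof is correct and follows the paper's argument. The upper bound is the same compactness-margin argument on $[w,\alpha]$, and the lower bound likewise produces a feasible grid point near $v$ through uniform convergence. The only difference is in that lower bound: you round up to $s_N=\alpha\lceil vM_N/\alpha\rceil/M_N\ge v$ and use monotonicity of $\widehat G_N$, whereas the paper uses continuity of $G$ to obtain a margin on an open interval $J\ni v$ and picks $s_N\in\mathcal T_N\cap J$ with $s_N\to v$. Your variant gives the slightly stronger conclusion $\tau_N\ge v$ eventually and, as you note, does not need continuity of $G$ for that half.
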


We also need the corresponding lower bound for the false discovery
proportion.  Suppose that $|I_{0,N}|=\pi_0M_N$ for every $N$, where
$\pi_0\in(0,1]$, and define the true-null empirical CDF
$$
  \widehat F_{0,N}(t)
  =\frac1{\pi_0M_N}\sum_{i\in I_{0,N}}\1\{P_{i,N}\le t\}.
$$

\begin{lemma}[Threshold bracketing implies an FDP lower bound]
\label{lem:fdp}
In addition to the assumptions of Lemma~\ref{lem:threshold}, suppose
$
  \|\widehat F_{0,N}-F_0\|_\infty\longrightarrow0
$
for a continuous CDF $F_0$.  Then
$$
  \liminf_{N\to\infty}\FDP_N
  \ge \frac{\alpha\pi_0F_0(v)}{w}.
$$
\end{lemma}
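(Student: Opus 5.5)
Lemma~\ref{lem:fdp} follows from Lemma~\ref{lem:threshold} combined with a direct count of the BH rejections, using the two-sided bracket $v\le\liminf\tau_N\le\limsup\tau_N\le w$.

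\emph{Step 1.} Write the FDP in terms of the threshold. With $R_N=M_N\tau_N/\alpha$ we get
$$
  \FDP_N=\frac{V_N}{\max(R_N,1)},
  \qquad
  V_N=\pi_0M_N\,\widehat F_{0,N}(\tau_N).
$$
Since $v>0$, Lemma~\ref{lem:threshold} gives $\tau_N\ge v/2$ for all large $N$. Hence $R_N\ge1$ eventually and
$$
  \FDP_N=\frac{\alpha\pi_0\widehat F_{0,N}(\tau_N)}{\tau_N}.
$$

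\emph{Step 2.} Bound the numerator below. Fix $\delta\in(0,v)$. By Lemma~\ref{lem:threshold}, $\tau_N\ge v-\delta$ for all large $N$. Because $\widehat F_{0,N}$ is nondecreasing,
$$
  \widehat F_{0,N}(\tau_N)\ge\widehat F_{0,N}(v-\delta)\ge F_0(v-\delta)-\|\widehat F_{0,N}-F_0\|_\infty .
$$

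\emph{Step 3.} Bound the denominator above. Lemma~\ref{lem:threshold} also gives $\tau_N\le w+\delta$ eventually. Combining this with Step 2,
$$
  \liminf_{N\to\infty}\FDP_N\ge\frac{\alpha\pi_0F_0(v-\delta)}{w+\delta}.
$$

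\emph{Step 4.} Let $\delta\downarrow0$. Continuity of $F_0$ at $v$ gives $F_0(v-\delta)\to F_0(v)$, which yields the claimed bound $\alpha\pi_0F_0(v)/w$.

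The only delicate point is Step~1. We must ensure $R_N\ge1$ so that $\max(R_N,1)=R_N$, and this is exactly where $v>0$ and the $\liminf$ half of Lemma~\ref{lem:threshold} enter. Everything else is monotonicity and uniform convergence, which are routine.

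No probabilistic issues arise at this stage. If the convergence hypotheses hold almost surely, or along a fixed realization, the conclusion holds in the same mode, since the argument is pathwise.
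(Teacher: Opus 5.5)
Your proposal is correct and takes essentially the same route as the paper's proof. Once $\liminf_N\tau_N\ge v>0$ forces $\tau_N>0$ (so $R_N\ge1$), you write $\FDP_N=\alpha\pi_0\widehat F_{0,N}(\tau_N)/\tau_N$, bound the numerator below by monotonicity of $\widehat F_{0,N}$ plus uniform convergence, bound $\tau_N$ above by $w+\delta$, and let the slack vanish using continuity of $F_0$; the only cosmetic difference is that you use one slack $\delta$ for both bounds, whereas the paper first proves $\liminf_N\widehat F_{0,N}(\tau_N)\ge F_0(v)$ and then introduces a separate $\varepsilon$.
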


Figure~\ref{fig:bh-crossing} summarizes the geometry of
Lemmas~\ref{lem:threshold}--\ref{lem:fdp}.  Only a feasible point at $v$ and a
strictly infeasible terminal interval beginning at $w$ are required.

\begin{figure}[htbp]
\centering
\begin{tikzpicture}[x=11.8cm,y=6.3cm,>=Latex]
  \fill[bhgreen!7] (0.31,0) rectangle (0.68,1.0);
  \fill[bhred!6] (0.68,0) rectangle (1.0,1.0);

  \draw[->,thick] (0,0) -- (1.055,0) node[below right] {$t$};
  \draw[->,thick] (0,0) -- (0,1.055) node[above left] {CDF value};
  \draw[very thick,bhgray] (0,0) -- (1,1)
    node[pos=0.82,above,sloped,font=\small] {$t/\alpha$};

  \draw[very thick,bhblue]
    plot[smooth] coordinates {
      (0,0) (0.10,0.18) (0.22,0.34) (0.31,0.46)
      (0.43,0.53) (0.55,0.57) (0.68,0.60)
      (0.82,0.68) (1.00,0.82)
    };
  \node[bhblue,font=\small,fill=white,inner sep=1pt] at (0.86,0.73) {$G_z(t)$};

  \draw[densely dashed,bhgreen] (0.31,0) -- (0.31,0.46);
  \fill[bhgreen] (0.31,0.46) circle (1.5pt);
  \node[above left,bhgreen!70!black,font=\small,align=right]
    at (0.31,0.46) {$G_z(v)>v/\alpha$\\(strictly feasible)};

  \draw[densely dashed,bhred] (0.68,0) -- (0.68,0.68);
  \fill[bhred] (0.68,0.60) circle (1.5pt);
  \node[below right,bhred!75!black,font=\small,align=left]
    at (0.69,0.56) {$G_z(t)<t/\alpha$\\for every $t\in[w,\alpha]$};

  \draw[densely dashed,bhpurple] (0.585,0) -- (0.585,0.585);
  \node[above,bhpurple,font=\small] at (0.585,0.585) {rightmost crossing};

  \foreach \x/\lab in {0/0,0.31/v,0.68/w,1/\alpha}{
    \draw (\x,0.012) -- (\x,-0.012) node[below,font=\small] {$\lab$};
  }
  \node[font=\scriptsize,bhgreen!60!black] at (0.49,0.08)
    {$v\le\liminf\tau_N\le\limsup\tau_N\le w$};
\end{tikzpicture}
\caption{Conceptual BH crossing geometry.  The displayed curve is schematic;
the proof uses only the two strict sign conditions, not uniqueness or
transversality of the crossing.}
\label{fig:bh-crossing}
\end{figure}

\section{Conditional limiting \texorpdfstring{$p$}{p}-value distributions}

We now recall the factor model introduced earlier. 
Let $a_N\in\R^{100N}$ be the vector whose entries are $3/10$ on the first
block, $-3/10$ on the second block, and $-18/25$ on the third block.  Let
$D_N$ be diagonal, with diagonal entries $91/100$, $91/100$, and $301/625$
on the respective blocks.  Then
$
  \Sigma_N=a_Na_N^{\mathsf T}+D_N.
$
Since every diagonal entry of $D_N$ is strictly positive, $D_N$ and therefore
$\Sigma_N$ are positive definite.  Moreover,
$$
 \left(\frac3{10}\right)^2+\frac{91}{100}=1,
 \qquad
 \left(\frac{18}{25}\right)^2+\frac{301}{625}=1.
$$
Thus every diagonal entry of $\Sigma_N$ is one, so $\Sigma_N$ is a correlation
matrix.  

The mean vector is zero on the first block, $12/5$ on the second
block, and $22/5$ on the third block.  In particular, all nonnull means are
positive.  Every true-null
coordinate is marginally $\mathcal{N}(0,1)$, so its two-sided $p$-value is valid and
uniform on $[0,1]$ marginally.

For $c\ge0$, define
$
  u(c)=2\Phibar(c).
$
Thus $u$ is continuous and strictly decreasing from $1$ to $0$, and
$P_i\le u(c)$ exactly when $|X_i|\ge c$.  For $a\ge0$ and $s>0$, define
\begin{equation}
  Q(c;a,s)
  =\Phibar\left(\frac{c-a}{s}\right)
   +\Phibar\left(\frac{c+a}{s}\right).
  \label{eq:Q-def}
\end{equation}
If $Y\sim \mathcal{N}(m,s^2)$, then
$
  \Pp(|Y|\ge c)=Q(c;|m|,s).
$

Conditional on $Z=z$, the means and standard deviations in the three blocks
are
\begin{align*}
  M_0(z)&=\frac{3z}{10},
  &s_0&=\frac{\sqrt{91}}{10},\\
  M_1(z)&=\frac{12}{5}-\frac{3z}{10},
  &s_1&=\frac{\sqrt{91}}{10},\\
  M_2(z)&=\frac{22}{5}-\frac{18z}{25},
  &s_2&=\frac{\sqrt{301}}{25}.
\end{align*}
Let $F_{g,z}$ be the conditional $p$-value CDF in block $g$.  Equation
\eqref{eq:Q-def} gives
$$
  F_{g,z}(u(c))=Q(c;|M_g(z)|,s_g).
$$
Because the three block proportions are exactly $0.96$, $0.01$, and $0.03$,
the conditional limiting CDF of all $p$-values is
\begin{equation}
  G_z(u(c))
  =0.96\cdot Q(c;|M_0(z)|,s_0)
   +0.01\cdot Q(c;|M_1(z)|,s_1)
   +0.03\cdot Q(c;|M_2(z)|,s_2).
  \label{eq:Gz}
\end{equation}
At $\alpha=0.01$, define
\begin{equation}
  h_z(c)=G_z(u(c))-100u(c).
  \label{eq:hz}
\end{equation}
Thus $h_z(c)\ge0$ is precisely the limiting BH feasibility condition at the
$p$-value threshold $u(c)$.

Conditional on $Z=z$, the $p$-values are independent and identically
distributed within each block.  The Glivenko--Cantelli theorem
\citep{vanDerVaartWellner1996} applied to each of the three blocks therefore
gives
$
  \sup_{0\le t\le1}|\widehat G_N(t)-G_z(t)|\longrightarrow0
$
almost surely under the conditional law given $Z=z$.  Applied to the null
block, it also gives
$$
  \sup_{0\le t\le1}|\widehat F_{0,N}(t)-F_{0,z}(t)|\longrightarrow0.
$$
By the existence of regular conditional laws and Fubini's theorem, these two
convergences hold jointly with unconditional probability one, with $z$ replaced
by the realized value $Z$.  All the limiting CDFs are continuous because the
conditional Gaussian laws have strictly positive variances.

For later use, we record two monotonicity properties of $Q$.  For $c,a\ge0$
and $s>0$,
$$
  \frac{\partial}{\partial c}Q(c;a,s)
  =-\frac1s\left\{
     \phi\left(\frac{c-a}{s}\right)
    +\phi\left(\frac{c+a}{s}\right)
  \right\}<0,
$$
so $Q$ is strictly decreasing in $c$.  Also,
$$
  \frac{\partial}{\partial a}Q(c;a,s)
  =\frac1s\left\{
     \phi\left(\frac{c-a}{s}\right)
    -\phi\left(\frac{c+a}{s}\right)
  \right\}\ge0.
$$
Indeed, $|c-a|\le c+a$, and the standard normal density is decreasing as a
function of the absolute value of its argument.  Hence $Q$ is nondecreasing in
$|m|$.

\section{A finite collection of inequalities suffices}

Let
$
  c_\alpha=\Phi^{-1}(1-\alpha/2).
$
Since BH thresholds never exceed $\alpha$, only $c\ge c_\alpha$ is relevant.
Partition $[-5,5]$ into the one thousand intervals
$$
  B_k=\left[\frac{k}{100},\frac{k+1}{100}\right],
  \qquad -500\le k\le499.
$$
For $g\in\{0,1,2\}$, define the exact extrema
$$
  m_{g,k}^-=\min_{z\in B_k}|M_g(z)|,
  \qquad
  m_{g,k}^+=\max_{z\in B_k}|M_g(z)|.
$$
Because each $M_g$ is affine, these extrema are obtained exactly from the two
endpoints and, when the affine function changes sign on the interval, from the
value zero.

Use the rational $c$-grid
$
  c_j=\frac{j}{1000},
  \, 2575\le j\le10000.
$
We provide below a numerical certificate that 
verifies $u(c_{2575})>0.01$, equivalently
$c_{2575}<c_\alpha$, so this grid starts below the entire relevant
$c$-domain.

For $2575\le j<10000$, define
\begin{align}
 U_{j,k}
 &=0.96\cdot Q(c_j;m_{0,k}^+,s_0)
   +0.01\cdot Q(c_j;m_{1,k}^+,s_1)
   +0.03\cdot Q(c_j;m_{2,k}^+,s_2)
   -100u(c_{j+1}).
 \label{eq:Ujk}
\end{align}
If $z\in B_k$ and $c\in[c_j,c_{j+1}]$, the monotonicities just proved imply
$$
  Q(c;|M_g(z)|,s_g)\le Q(c_j;m_{g,k}^+,s_g),
$$
while the decrease of $u$ gives $-100u(c)\le-100u(c_{j+1})$.  Therefore
\begin{equation}
  h_z(c)\le U_{j,k}
  \quad\text{on }B_k\times[c_j,c_{j+1}].
  \label{eq:upper-rectangle}
\end{equation}

Let $j_k$ be the first grid index for which $U_{j,k}$ is not certified to be
strictly negative, and put
$
  a_k=c_{j_k}.
$
Our numerical certificate verifies $u(a_k)<\alpha$.  In particular, $j_k>2575$.  Every
preceding rectangle has $U_{j,k}<0$, so
\begin{equation}
  h_z(c)<0
  \quad\text{for every }z\in B_k
  \text{ and every }c\in[c_\alpha,a_k].
  \label{eq:negative-prefix}
\end{equation}
The endpoint $a_k$ is included because it is the right endpoint of the last
strictly certified rectangle.

For $j\ge j_k$, define the pointwise lower bound
\begin{align}
 L_{j,k}
 &=0.96\cdot Q(c_j;m_{0,k}^-,s_0)
   +0.01\cdot Q(c_j;m_{1,k}^-,s_1)
   +0.03\cdot Q(c_j;m_{2,k}^-,s_2)
   -100u(c_j).
 \label{eq:Ljk}
\end{align}
Let $\ell_k\ge j_k$ be the first index for which $L_{\ell_k,k}$ is certified
to be strictly positive, and put
$
  b_k=c_{\ell_k}.
$
For every $z\in B_k$, monotonicity in the absolute mean gives
\begin{equation}
  h_z(b_k)\ge L_{\ell_k,k}>0.
  \label{eq:positive-point}
\end{equation}
Because $b_k\ge a_k$, one has $u(b_k)\le u(a_k)<\alpha$.

Fix an outcome in the probability-one event on which both empirical-CDF
convergences hold, and suppose that its realized factor value $z=Z$ lies in
$B_k$.  In Lemma~\ref{lem:threshold}, take
$$
  v=u(b_k),
  \qquad
  w=u(a_k),
  \qquad
  G=G_z.
$$
As $u$ is decreasing, condition \eqref{eq:negative-prefix} is exactly
$G_z(t)<100t=t/\alpha$ for every $t\in[u(a_k),\alpha]$, and
\eqref{eq:positive-point} is exactly
$G_z(u(b_k))>100u(b_k)$.  Lemma~\ref{lem:fdp}, with $\pi_0=0.96$, yields
$$
  \liminf_{N\to\infty}\FDP_N
  \ge
  \frac{0.01\cdot0.96 \cdot F_{0,z}(u(b_k))}{u(a_k)}.
$$
Now
$$
  F_{0,z}(u(b_k))
  =Q(b_k;|M_0(z)|,s_0)
  \ge Q(b_k;m_{0,k}^-,s_0).
$$
Consequently, on this probability-one event, whenever $Z\in B_k$,
\begin{equation}
  \liminf_{N\to\infty}\FDP_N\ge d_k,
  \qquad
  d_k=\frac{0.0096 Q(b_k;m_{0,k}^-,s_0)}{u(a_k)}.
  \label{eq:dk}
\end{equation}

The finite reduction for one $z$-bin is shown in
Figure~\ref{fig:certificate-grid}.  The upper bounds $U_{j,k}$ certify a whole
prefix of infeasible BH thresholds, whereas one lower bound $L_{\ell_k,k}$
provides a feasible point farther out in the Gaussian-tail coordinate.

\begin{figure}[htbp]
\centering
\begin{tikzpicture}[x=8.8cm,y=1.15cm,>=Latex]
  % Main cell and grid.
  \fill[bhred!10] (0,0) rectangle (1,2.5);
  \fill[bhorange!12] (0,2.5) rectangle (1,3.0);
  \draw[thick] (0,0) rectangle (1,5.0);
  \foreach \y in {0.5,1.0,1.5,2.0,2.5,3.0,3.5,4.0,4.5}{
    \draw[bhgray!45] (0,\y) -- (1,\y);
  }
  \draw[very thick,bhred] (0,2.5) -- (1,2.5);
  \draw[very thick,bhgreen] (0,4.15) -- (1,4.15);

  % Axes and labels.
  \draw[->,thick] (-0.02,0) -- (-0.02,5.28) node[above] {$c$};
  \draw[->,thick] (0,-0.05) -- (1.12,-0.05) node[right] {$z$};
  \node[below,font=\small] at (0,-0.05) {$k/100$};
  \node[below,font=\small] at (1,-0.05) {$(k+1)/100$};
  \node[left,font=\small] at (-0.02,0) {$c_\alpha$};
  \node[left,bhred!80!black,font=\small] at (-0.02,2.5) {$a_k$};
  \node[left,bhgreen!70!black,font=\small] at (-0.02,4.15) {$b_k$};

  % Annotations.
  \node[align=center,bhred!80!black,font=\small] at (0.5,1.25)
    {$U_{j,k}<0$ on every\\preceding rectangle};
  \node[align=center,bhorange!85!black,font=\scriptsize] at (0.5,2.75)
    {first rectangle not\\certified negative};
  \fill[bhgreen] (0.55,4.15) circle (2pt);
  \node[right,bhgreen!70!black,font=\small,align=left]
    at (0.57,4.15) {$L_{\ell_k,k}>0$\\for all $z\in B_k$};

  % Decreasing u(c) and threshold bracket.
  \draw[->,thick,bhpurple] (1.18,4.65) -- (1.18,0.35)
    node[midway,right,align=left,font=\small] {$u(c)$ increases\\downward};
  \node[draw=bhpurple,rounded corners=2pt,fill=bhpurple!6,
        align=center,font=\small,text width=7.6cm]
    at (0.5,-1.1)
    {$u(b_k)\le\liminf_N\tau_N\le\limsup_N\tau_N\le u(a_k)$,
     hence $\displaystyle\liminf_N\FDP_N\ge d_k$.};
\end{tikzpicture}
\caption{One certified rectangle column $B_k\times[c_\alpha,10]$.
Monotonicity in $c$ and in the absolute conditional means makes the displayed
finite sign checks valid uniformly over the entire bin.}
\label{fig:certificate-grid}
\end{figure}

\section{The certified lower bound and completion of the proof}

Every rational input to the certificate---the means, factor loadings, block
weights, $z$-bin endpoints, and $c$-grid points---is specified using ratios of
integers, without binary floating-point literals.  The square roots defining
the residual standard deviations and all Gaussian tails
are evaluated as Arb balls.  Arb performs outward-rounded midpoint-radius
ball arithmetic \citep{Johansson2017}: every computed ball contains the exact
real value.  A strict
comparison is accepted only when the entire resulting ball lies on the stated
side of zero.  Thus the tests $U_{j,k}<0$, $L_{j,k}>0$,
$u(c_{2575})>\alpha$, and $u(a_k)<\alpha$ are rigorous interval statements,
not floating-point heuristics.

The certificate computes the right side of
\begin{equation}
  \sum_{k=-500}^{499}d_k
  \left\{\Phi\left(\frac{k+1}{100}\right)
        -\Phi\left(\frac{k}{100}\right)\right\}.
  \label{eq:cert-sum}
\end{equation}
For readability, the one thousand terms are grouped into ten unit intervals.
The following are certified strict lower bounds; every displayed decimal is
rounded downward:

\begin{center}
\begin{tabular}{@{}cc@{}}
\toprule
Range of $Z$ & Contribution to \eqref{eq:cert-sum} \\
\midrule
$[-5,-4]$ & $0.000006254227057215695292015471184518827$ \\
$[-4,-3]$ & $0.000116801968722543993192086605837856445$ \\
$[-3,-2]$ & $0.000762743726482023098968624717496448543$ \\
$[-2,-1]$ & $0.001839640615452850387738159572547846549$ \\
$[-1,0]$  & $0.002006865275162943558353752660173437476$ \\
$[0,1]$   & $0.001953237173075828884087571780890001262$ \\
$[1,2]$   & $0.001827735787140664069157918588581283062$ \\
$[2,3]$   & $0.001367544916156818165023914540138520994$ \\
$[3,4]$   & $0.000508812722703075293268312075251660515$ \\
$[4,5]$   & $0.000027192658519749972390210373148917831$ \\
\midrule
Total on $[-5,5]$
& $0.010416829070473713117472566385250491510$ \\
\bottomrule
\end{tabular}
\end{center}

Since $0\le\FDP_N\le1$, Fatou's lemma applies.  The bins cover $[-5,5]$ up to
endpoints of Gaussian probability zero.  Equation \eqref{eq:dk} and the
nonnegativity of the contribution from $|Z|>5$ give
\begin{align*}
  \liminf_{N\to\infty}\FDR_N
  =\liminf_{N\to\infty}\E[\FDP_N]
  \ge\E\left[\liminf_{N\to\infty}\FDP_N\right]
  &\ge\sum_{k=-500}^{499}d_k\Pp(Z\in B_k)
  >0.0104.
\end{align*}
This proves Theorem~\ref{thm:main}.

\section{A Monte Carlo experiment}
\label{sec:experiment}

Here we provide a Monte Carlo experiment to support the theoretical analysis.  
However,
a naive Monte Carlo experiment is inefficient:
the conditional FDP is typically modest for central values of the common
factor $Z$, whereas uncommon factor values can produce much larger FDPs and
make a nonnegligible contribution to the expectation.  We therefore stratify
on $Z$ while simulating every residual coordinate and recomputing the BH rule
without approximation.

Let $K=1000$ and define the equiprobable standard-normal strata
$$
  I_k=\left(\Phi^{-1}\left(\frac{k-1}{K}\right),
            \Phi^{-1}\left(\frac{k}{K}\right)\right],
  \qquad 1\le k\le K,
$$
with the usual interpretations at probabilities zero and one.  In
macro-replication $b$, draw independently
$$
  U_{b,k}\sim\operatorname{Unif}\left(\frac{k-1}{K},\frac{k}{K}\right),
  \qquad Z_{b,k}=\Phi^{-1}(U_{b,k}),
$$
one draw from each stratum.  Conditional on each $Z_{b,k}$, generate all
$100N$ Gaussian coordinates from \eqref{eq:block0}--\eqref{eq:block2}, form
the two-sided $p$-values, sort them, apply ordinary BH at $\alpha=0.01$ exactly, and record the resulting $D_{b,k,N}=\FDP_{b,k,N}$.
The macro-replication estimate is
$
  Y_{b,N}=\frac1K\sum_{k=1}^K D_{b,k,N}.
$
Because every stratum has probability $1/K$ and $Z_{b,k}$ has the conditional
law of $Z$ given $Z\in I_k$,
$$
  \E[Y_{b,N}]
  =\frac1K\sum_{k=1}^K\E[\FDP_N\mid Z\in I_k]
  =\E[\FDP_N]
  =\FDR_N.
$$
Thus stratification changes the Monte Carlo variance but not the estimand.
We used $B=100$ independent macro-replications and reported
$
  \widehat{\FDR}_N=\frac1B\sum_{b=1}^B Y_{b,N}$,
$\widehat{\operatorname{MCSE}}
  =\frac{s_Y}{\sqrt B},
$
where $s_Y$ is the sample standard deviation of the $Y_{b,N}$.  The intervals
below are conventional Student-$t$ Monte Carlo intervals with $B-1=99$
degrees of freedom.

The experiment used $100{,}000$ complete Gaussian data sets at each dimension.
% The exact seeds were $20260711$ for $N=50$; $20260712$ and $20260713$ for two
% independent blocks of fifty macro-replications at $N=100$; and $20260720$--
% $20260723$ for four independent blocks of twenty-five macro-replications at
% $N=200$.  
The retained reproducibility bundle uses Python~3.12.3, NumPy~1.26.4,
SciPy~1.14.1, and python-flint~0.8.0.  The complete executable and the
macro-replication outputs are available in the GitHub repository at
\url{https://github.com/dobriban/BH}.

\begin{center}
\small
\begin{tabular}{@{}rrrrrc@{}}
\toprule
$N$ & $m$ & $\widehat{\FDR}_N$ & MCSE & $95\%$ MC interval & $p_+$ \\
\midrule
$50$  & $5{,}000$  & $0.009936$ & $0.000113$
  & $[0.009711,\,0.010161]$ & $0.713$ \\
$100$ & $10{,}000$ & $0.010129$ & $0.000096$
  & $[0.009939,\,0.010320]$ & $0.0905$ \\
$200$ & $20{,}000$ & $0.010359$ & $0.000103$
  & $[0.010155,\,0.010563]$ & $3.56\times10^{-4}$ \\
\bottomrule
\end{tabular}
\end{center}
Here $p_+$ denotes the one-sided Student-$t$ Monte Carlo $p$-value for the
null inequality $\FDR_N\le0.01$.

\begin{figure}[htbp]
\centering
\begin{tikzpicture}
\begin{axis}[
  width=0.84\textwidth,
  height=6.4cm,
  xlabel={$N$},
  ylabel={estimated FDR},
  xmin=35,
  xmax=215,
  ymin=0.00965,
  ymax=0.01062,
  xtick={50,100,200},
  ytick={0.0097,0.0098,0.0099,0.0100,0.0101,0.0102,0.0103,0.0104,0.0105,0.0106},
  scaled y ticks=false,
  yticklabel style={/pgf/number format/fixed,/pgf/number format/precision=4},
  grid=major,
  grid style={draw=bhgray!15},
  legend style={draw=none,fill=none,font=\small,at={(0.02,0.98)},anchor=north west},
]
  \addplot[bhred,dashed,very thick,domain=35:215,samples=2] {0.01};
  \addlegendentry{nominal level $\alpha=0.01$}
  \addplot+[
    only marks,
    mark=*,
    mark size=2.5pt,
    bhblue,
    very thick,
    error bars/.cd,
    y dir=both,
    y explicit
  ] coordinates {
    (50,0.0099361666) +- (0,0.0002248724)
    (100,0.0101292213) +- (0,0.0001902903)
    (200,0.0103589142) +- (0,0.0002037728)
  };
  \addlegendentry{estimate and $95\%$ MC interval}
\end{axis}
\end{tikzpicture}
\caption{Finite-sample stratified Monte Carlo estimates.  The first two
intervals do not resolve the sign of $\FDR_N-\alpha$, while the interval at
$N=200$ lies wholly above the nominal level.}
\label{fig:simulation-results}
\end{figure}

At $N=200$, the excess is
$\widehat{\FDR}_{200}-\alpha=0.0003589$, or about $3.59\%$ of the nominal
level.  The corresponding statistic is $3.495$ Monte Carlo standard errors
above $\alpha$, giving the one-sided $p$-value in the table.  Thus the direct
finite-dimensional experiment provides evidence of the same failure of
control established asymptotically.  At $N=50$ and $N=100$, the intervals
still overlap the nominal level.  This may be because more Monte Carlo
replications are needed, or because the true finite-sample FDR does not exceed
the nominal level at those dimensions.

\section{Discussion}

Several points merit further study.  The example above violates the nominal
level only slightly, and additional numerical searches over related models have
found similarly small violations.  This raises the question of whether a
universal bound exists on the possible inflation of the FDR above its nominal
level.

Moreover, the example uses a large number of tests.  It is therefore important
to determine whether the BH procedure is guaranteed to control the FDR for
smaller numbers of tests and, if not, to obtain bounds that depend explicitly
on the number of tests.  Both questions are directions for future research.

%\clearpage
{\small
\bibliographystyle{plainnat}
\bibliography{ref}

@misc{wang2026adaboost,
      title={AdaBoost Does Not Always Cycle: A Computer-Assisted Counterexample}, 
      author={Erik Y. Wang},
      year={2026},
      eprint={2604.07055},
      archivePrefix={arXiv},
      primaryClass={cs.LG},
      url={https://arxiv.org/abs/2604.07055}, 
}

@misc{abouzaid2026first,
  author       = {Mohammed Abouzaid and Andrew J. Blumberg and Martin Hairer
                  and Joe Kileel and Tamara G. Kolda and Paul D. Nelson
                  and Daniel Spielman and Nikhil Srivastava and Rachel Ward
                  and Shmuel Weinberger and Lauren Williams},
  title        = {First Proof Solutions and Comments},
  year         = {2026},
  month        = feb,
  note         = {Manuscript dated February 14, 2026},
  howpublished = {\url{https://1stproof.org/documents/FirstProofSolutionsComments.pdf}}
}

@misc{openai2026unitdistance,
  author       = {{OpenAI}},
  title        = {Planar Point Sets with Many Unit Distances},
  year         = {2026},
  howpublished = {\url{https://cdn.openai.com/pdf/74c24085-19b0-4534-9c90-465b8e29ad73/unit-distance-proof.pdf}},
  note         = {Accessed 2026-07-13}
}

@misc{openai2026cycle,
  author       = {{OpenAI}},
  title        = {A Proof of the Cycle Double Cover Conjecture},
  year         = {2026},
  howpublished = {\url{https://cdn.openai.com/pdf/04d1d1e4-bc75-476a-97cf-49055cd98d31/cdc_proof.pdf}},
  note         = {Accessed 2026-07-13}
}

@article{dobriban2025solving,
  title={Solving a Research Problem in Mathematical Statistics with AI Assistance},
  author={Dobriban, Edgar},
  journal={arXiv preprint arXiv:2511.18828},
  year={2025}
}

@article{alexeev2025forbidden,
  title={Forbidden Sidon subsets of perfect difference sets, featuring a human-assisted proof},
  author={Alexeev, Boris and Mixon, Dustin G},
  journal={arXiv preprint arXiv:2510.19804},
  year={2025}
}

@article{alexeev2025asymptotically,
  title={Asymptotically optimal approximate Hadamard matrices},
  author={Alexeev, Boris and Jasper, John and Mixon, Dustin G},
  journal={arXiv preprint arXiv:2511.14653},
  year={2025}
}

@article{feldman2025g,
  title={G$\backslash$" odel Test: Can Large Language Models Solve Easy Conjectures?},
  author={Feldman, Moran and Karbasi, Amin},
  journal={arXiv preprint arXiv:2509.18383},
  year={2025}
}

@misc{bubeck2025early,
      title={Early science acceleration experiments with GPT-5}, 
      author={Sébastien Bubeck and Christian Coester and Ronen Eldan and Timothy Gowers and Yin Tat Lee and Alexandru Lupsasca and Mehtaab Sawhney and Robert Scherrer and Mark Sellke and Brian K. Spears and Derya Unutmaz and Kevin Weil and Steven Yin and Nikita Zhivotovskiy},
      year={2025},
      eprint={2511.16072},
      archivePrefix={arXiv},
      primaryClass={cs.CL},
      url={https://arxiv.org/abs/2511.16072}, 
}

@article{salim2025accelerating,
  title={Accelerating mathematical research with language models: A case study of an interaction with GPT-5-Pro on a convex analysis problem},
  author={Salim, Adil},
  journal={arXiv preprint arXiv:2510.26647},
  year={2025}
}

@article{jang2025point,
  title={Point Convergence of Nesterov's Accelerated Gradient Method: An AI-Assisted Proof},
  author={Jang, Uijeong and Ryu, Ernest K},
  journal={arXiv preprint arXiv:2510.23513},
  year={2025}
}

@article{farcomeni2006more,
  title={More powerful control of the false discovery rate under dependence},
  author={Farcomeni, Alessio},
  journal={Statistical Methods and Applications},
  volume={15},
  number={1},
  pages={43--73},
  year={2006},
  publisher={Springer}
}

@article{kim2008effects,
  title={Effects of dependence in high-dimensional multiple testing problems},
  author={Kim, Kyung In and van de Wiel, Mark A},
  journal={BMC bioinformatics},
  volume={9},
  number={1},
  pages={114},
  year={2008},
  publisher={Springer}
}

@article{Benjamini2010,
  author  = {Benjamini, Y.},
  title   = {Discovering the false discovery rate},
  journal = {Journal of the Royal Statistical Society: Series B},
  year    = {2010},
  volume  = {72},
  number  = {4},
  pages   = {405--416},
  doi     = {10.1111/j.1467-9868.2010.00746.x}
}

@article{BenjaminiHochberg1995,
  author  = {Benjamini, Y. and Hochberg, Y.},
  title   = {Controlling the false discovery rate: A practical and powerful
             approach to multiple testing},
  journal = {Journal of the Royal Statistical Society: Series B},
  year    = {1995},
  volume  = {57},
  number  = {1},
  pages   = {289--300},
  doi     = {10.1111/j.2517-6161.1995.tb02031.x}
}

@article{BenjaminiYekutieli2001,
  author  = {Benjamini, Y. and Yekutieli, D.},
  title   = {The control of the false discovery rate in multiple testing under
             dependency},
  journal = {The Annals of Statistics},
  year    = {2001},
  volume  = {29},
  number  = {4},
  pages   = {1165--1188},
  doi     = {10.1214/aos/1013699998}
}

@article{BlanchardRoquain2008,
  author  = {Blanchard, G. and Roquain, E.},
  title   = {Two simple sufficient conditions for {FDR} control},
  journal = {Electronic Journal of Statistics},
  year    = {2008},
  volume  = {2},
  pages   = {963--992},
  doi     = {10.1214/08-EJS180}
}

@article{FinnerDickhausRoters2007,
  author  = {Finner, H. and Dickhaus, T. and Roters, M.},
  title   = {Dependency and false discovery rate: Asymptotics},
  journal = {The Annals of Statistics},
  year    = {2007},
  volume  = {35},
  number  = {4},
  pages   = {1432--1455},
  doi     = {10.1214/009053607000000046}
}

@article{FithianLei2022,
  author  = {Fithian, W. and Lei, L.},
  title   = {Conditional calibration for false discovery rate control under
             dependence},
  journal = {The Annals of Statistics},
  year    = {2022},
  volume  = {50},
  number  = {6},
  pages   = {3091--3118},
  doi     = {10.1214/21-AOS2137}
}

@article{GenoveseWasserman2002,
  author  = {Genovese, C. R. and Wasserman, L.},
  title   = {Operating characteristics and extensions of the false discovery
             rate procedure},
  journal = {Journal of the Royal Statistical Society: Series B},
  year    = {2002},
  volume  = {64},
  number  = {3},
  pages   = {499--517},
  doi     = {10.1111/1467-9868.00347}
}

@article{GenoveseWasserman2004,
  author  = {Genovese, C. R. and Wasserman, L.},
  title   = {A stochastic process approach to false discovery control},
  journal = {The Annals of Statistics},
  year    = {2004},
  volume  = {32},
  number  = {3},
  pages   = {1035--1061},
  doi     = {10.1214/009053604000000283}
}

@misc{GhoshSarkar2025,
  author        = {Ghosh, D. and Sarkar, S. K.},
  title         = {Dependence-aware false discovery rate control in two-sided
                   {Gaussian} mean testing},
  year          = {2025},
  eprint        = {2511.19960},
  archiveprefix = {arXiv},
  note          = {Preprint}
}

@article{Johansson2017,
  author  = {Johansson, F.},
  title   = {{Arb}: Efficient arbitrary-precision midpoint-radius interval
             arithmetic},
  journal = {IEEE Transactions on Computers},
  year    = {2017},
  volume  = {66},
  number  = {8},
  pages   = {1281--1292},
  doi     = {10.1109/TC.2017.2690633}
}

@article{ReinerBenaim2007,
  author  = {Reiner-Benaim, A.},
  title   = {{FDR} control by the {BH} procedure for two-sided correlated tests
             with implications to gene expression data analysis},
  journal = {Biometrical Journal},
  year    = {2007},
  volume  = {49},
  number  = {1},
  pages   = {107--126},
  doi     = {10.1002/bimj.200510313}
}

@article{Sarkar2002,
  author  = {Sarkar, S. K.},
  title   = {Some results on false discovery rate in stepwise multiple testing
             procedures},
  journal = {The Annals of Statistics},
  year    = {2002},
  volume  = {30},
  number  = {1},
  pages   = {239--257},
  doi     = {10.1214/aos/1015362192}
}

@misc{Sarkar2023,
  author        = {Sarkar, S. K.},
  title         = {On controlling the false discovery rate in multiple testing
                   of the means of correlated normals against two-sided
                   alternatives},
  year          = {2023},
  eprint        = {2304.05261},
  archiveprefix = {arXiv},
  note          = {Preprint}
}

@article{SarkarZhang2025,
  author  = {Sarkar, S. K. and Zhang, S.},
  title   = {Shifted {BH} methods for controlling false discovery rate in
             multiple testing of the means of correlated normals against
             two-sided alternatives},
  journal = {Journal of Statistical Planning and Inference},
  year    = {2025},
  volume  = {236},
  pages   = {106238},
  doi     = {10.1016/j.jspi.2024.106238}
}

@article{StoreyTaylorSiegmund2004,
  author  = {Storey, J. D. and Taylor, J. E. and Siegmund, D.},
  title   = {Strong control, conservative point estimation and simultaneous
             conservative consistency of false discovery rates: A unified
             approach},
  journal = {Journal of the Royal Statistical Society: Series B},
  year    = {2004},
  volume  = {66},
  number  = {1},
  pages   = {187--205},
  doi     = {10.1111/j.1467-9868.2004.00439.x}
}

@book{vanDerVaartWellner1996,
  author    = {van der Vaart, A. W. and Wellner, J. A.},
  title     = {Weak Convergence and Empirical Processes},
  publisher = {Springer},
  address   = {New York},
  year      = {1996}
}
}

\appendix
\section{Proofs}

\subsection{Proof of Lemma~\ref{lem:threshold}}

\begin{proof}
Set $H(t)=G(t)-t/\alpha$.  By continuity and the strict inequality
\eqref{eq:no-feasible}, the maximum of $H$ on the compact interval
$[w,\alpha]$ is a strictly negative number.  Hence there is an $\varepsilon>0$
such that
$
  H(t)\le-2\varepsilon
  \quad\text{for all }t\in[w,\alpha].
$
For all sufficiently large $N$, uniform convergence gives
$\|\widehat G_N-G\|_\infty<\varepsilon$.  Therefore
$$
  \widehat G_N(t)-\frac{t}{\alpha}
  \le H(t)+\varepsilon
  \le-\varepsilon<0
$$
for every $t\in[w,\alpha]$.  No BH grid point in that interval is feasible,
which proves $\limsup_N\tau_N\le w$.

For the lower bound, 
$G(v)>\frac{v}{\alpha}$
and continuity give a
$\delta>0$ and an open interval $J$ containing $v$ such that
$H(t)\ge2\delta$
for all $t\in J$.
The mesh of $\mathcal T_N$ is $\alpha/M_N\to0$, so one can choose
$s_N\in\mathcal T_N\cap J$ with $s_N\to v$.  For all sufficiently large $N$,
uniform convergence gives
$$
  \widehat G_N(s_N)-\frac{s_N}{\alpha}
  \ge H(s_N)-\delta
  \ge\delta>0.
$$
Thus $s_N$ is feasible, and the maximal feasible grid point obeys
$\tau_N\ge s_N$.  Taking lower limits proves
$\liminf_N\tau_N\ge v$.
\end{proof}

\subsection{Proof of Lemma~\ref{lem:fdp}}

\begin{proof}
Lemma~\ref{lem:threshold} gives $\liminf_N\tau_N\ge v>0$, so eventually
$\tau_N>0$.  By the definition of the BH threshold,
$
  R_N=\frac{M_N\tau_N}{\alpha}.
$
The number of false rejections is
$
  V_N=\pi_0M_N\widehat F_{0,N}(\tau_N).
$
Consequently,
\begin{equation}
  \FDP_N
  =\frac{V_N}{R_N}
  =\frac{\alpha\pi_0\widehat F_{0,N}(\tau_N)}{\tau_N}.
  \label{eq:fdp-threshold-identity}
\end{equation}
For every $\varepsilon\in(0,v)$, the inequality
$\tau_N\ge v-\varepsilon$ holds eventually.  Monotonicity of empirical CDFs and
uniform convergence then give
$
  \liminf_{N\to\infty}\widehat F_{0,N}(\tau_N)
  \ge F_0(v-\varepsilon).
$
Letting $\varepsilon\downarrow0$ and using continuity of $F_0$ yields
$
  \liminf_{N\to\infty}\widehat F_{0,N}(\tau_N)\ge F_0(v).
$
For any $\varepsilon>0$, these two bounds imply, eventually,
$\widehat F_{0,N}(\tau_N)\ge F_0(v)-\varepsilon$ and
$\tau_N\le w+\varepsilon$.  Substitution in
\eqref{eq:fdp-threshold-identity} gives
$$
  \FDP_N\ge
  \frac{\alpha\pi_0\{F_0(v)-\varepsilon\}}{w+\varepsilon}.
$$
Letting $\varepsilon\downarrow0$ proves the result.
\end{proof}

\section{Complete outward-rounded certificate}\label{app:certificate}

The following Python program is the complete numerical certificate used above.
It requires \texttt{python-flint 0.8.0}.  The specialization to the exact grids used
in the proof deliberately avoids floating-point conversion in all mathematical
inputs and all grid-index calculations.

\begin{lstlisting}[style=certificate]
#!/usr/bin/env python3
"""Outward-rounded certificate for the two-sided Gaussian BH counterexample.

Dependency: python-flint

All model parameters and all subdivision endpoints are exact rationals.
Every transcendental evaluation is an Arb ball with outward rounding.
"""

from flint import arb, ctx

ctx.dps = 40

ALPHA = arb(1) / 100
PI0 = arb(24) / 25
W1 = arb(1) / 100
W2 = arb(3) / 100
R0 = arb(3) / 10
R1 = -arb(3) / 10
R2 = -arb(18) / 25
MU1 = arb(12) / 5
MU2 = arb(22) / 5
S0 = (1 - R0 * R0).sqrt()
S1 = (1 - R1 * R1).sqrt()
S2 = (1 - R2 * R2).sqrt()
SQRT2 = arb(2).sqrt()

Z_DEN = 100
C_DEN = 1000
K_MIN = -500
K_MAX = 500
J_START = 2575
J_STOP = 10000


def normal_upper_tail(x: arb) -> arb:
    return (x / SQRT2).erfc() / 2


def normal_cdf(x: arb) -> arb:
    return 1 - normal_upper_tail(x)


def two_sided_tail(c: arb, abs_mean: arb, sd: arb) -> arb:
    return (
        normal_upper_tail((c - abs_mean) / sd)
        + normal_upper_tail((c + abs_mean) / sd)
    )


def p_threshold(c: arb) -> arb:
    return 2 * normal_upper_tail(c)


def abs_range_of_affine(
    mu: arb, loading: arb, lo: arb, hi: arb
) -> tuple[arb, arb]:
    left = mu + loading * lo
    right = mu + loading * hi
    abs_left = abs(left)
    abs_right = abs(right)
    if (left <= 0 and right >= 0) or (right <= 0 and left >= 0):
        minimum = arb(0)
    else:
        minimum = abs_left if abs_left < abs_right else abs_right
    maximum = abs_left if abs_left > abs_right else abs_right
    return minimum, maximum


def certify_bin(k: int) -> arb:
    z_lo = arb(k) / Z_DEN
    z_hi = arb(k + 1) / Z_DEN

    m0_lo, m0_hi = abs_range_of_affine(arb(0), R0, z_lo, z_hi)
    m1_lo, m1_hi = abs_range_of_affine(MU1, R1, z_lo, z_hi)
    m2_lo, m2_hi = abs_range_of_affine(MU2, R2, z_lo, z_hi)

    c_start = arb(J_START) / C_DEN
    assert p_threshold(c_start) > ALPHA

    j_lower = None
    for j in range(J_START, J_STOP):
        c_j = arb(j) / C_DEN
        c_next = arb(j + 1) / C_DEN
        h_upper = (
            PI0 * two_sided_tail(c_j, m0_hi, S0)
            + W1 * two_sided_tail(c_j, m1_hi, S1)
            + W2 * two_sided_tail(c_j, m2_hi, S2)
            - 100 * p_threshold(c_next)
        )
        if not (h_upper < 0):
            j_lower = j
            break
    if j_lower is None:
        raise RuntimeError(f"No lower bracket in z-bin {k}")

    c_lower = arb(j_lower) / C_DEN
    # This is equivalent to c_lower > c_alpha and ensures that at least
    # one preceding cell was rigorously certified negative.
    assert p_threshold(c_lower) < ALPHA

    j_upper = None
    for j in range(j_lower, J_STOP + 1):
        c_j = arb(j) / C_DEN
        h_lower = (
            PI0 * two_sided_tail(c_j, m0_lo, S0)
            + W1 * two_sided_tail(c_j, m1_lo, S1)
            + W2 * two_sided_tail(c_j, m2_lo, S2)
            - 100 * p_threshold(c_j)
        )
        if h_lower > 0:
            j_upper = j
            break
    if j_upper is None:
        raise RuntimeError(f"No feasible point in z-bin {k}")

    c_upper = arb(j_upper) / C_DEN
    fdp_lower = (
        (PI0 / 100)
        * two_sided_tail(c_upper, m0_lo, S0)
        / p_threshold(c_lower)
    )
    gaussian_mass = normal_cdf(z_hi) - normal_cdf(z_lo)
    return fdp_lower * gaussian_mass


def main() -> None:
    total = arb(0)
    unit_totals: dict[int, arb] = {}

    for k in range(K_MIN, K_MAX):
        contribution = certify_bin(k)
        total += contribution
        unit = k // Z_DEN
        unit_totals[unit] = unit_totals.get(unit, arb(0)) + contribution

    for unit in sorted(unit_totals):
        print(f"z in [{unit},{unit + 1}]: {unit_totals[unit]}")
    print(f"certified total over [-5,5]: {total}")

    assert total > arb("0.0104168290704737131174725663852504915")
    assert total > ALPHA
    print(
        "CERTIFIED: liminf FDR > "
        "0.0104168290704737131174725663852504915 > alpha = 0.01"
    )


if __name__ == "__main__":
    main()
\end{lstlisting}

\end{document}